\title[A Factorization of a L\'evy Process over a Phase-Type  Horizon] {A Factorization of a L\'evy Process \\
over a Phase-Type  Horizon}
\author[S.\ Asmussen \and J.\ Ivanovs]{S{\o}ren Asmussen \and Jevgenijs Ivanovs}
 \newcommand{\CoxFig}{
\newcommand{\ath}{thick}  \newcommand{\bth}{[thick]}
\newcommand{\drct}{\filldraw[fill=blue!20!white,thick]}
  \begin{figure}[htbp]
   \centering 
\begin{tikzpicture}[scale=0.61] 
\footnotesize
\drct (0,0) rectangle (1,1)  (3,0) rectangle (4,1) (6,0) rectangle (7,1) (9,0) rectangle (10,1) (12,0) rectangle (13,1);
\drct (15,0) rectangle (16,1) (18,0) rectangle (19,1);
\drct (0,3) rectangle (1,4)  (3,3) rectangle (4,4) (6,3) rectangle (7,4) (9,3) rectangle (10,4) (12,3) rectangle (13,4);
\drct (15,3) rectangle (16,4) (18,3) rectangle (19,4);
\draw (0.5,0.5) node {1} (3.5,0.5) node {2}   (9.5,0.5) node {$j$} (15.5,0.5) node {$n\!\!-\!\!1$} (18.5,0.5) node {$n$};
\draw (0.5,3.5) node {1} (3.5,3.5) node {2}   (9.5,3.5) node {$j$} (15.5,3.5) node {$n\!\!-\!\!1$} (18.5,3.5) node {$n$};
\draw[thick,<-] (-1.8,0.5) -- (-0.6,0.5); 
\draw[thick,<-] (1.4,0.5) -- (2.6,0.5);
\draw[thick,<-] (4.4,0.5) -- (5.6,0.5); \draw[thick,<-] (7.4,0.5) -- (8.6,0.5);
\draw[thick,<-] (10.4,0.5) -- (11.6,0.5); \draw[thick,<-] (13.4,0.5) -- (14.6,0.5); 
\draw[thick,<-] (16.4,0.5) -- (17.6,0.5); 
\draw (-1.2,0.9) node {$\lambda_1$} (2,0.9) node {$\lambda_2$} (5,3.9) (8,0.9) node {$\lambda_{j}$};
\draw (11,0.9) node {$\lambda_{j+1}$} (17,0.9) node {$\lambda_{n}$};
\draw[thick,->] (-1.8,3.5) -- (-0.6,3.5); \draw[thick,->] (1.4,3.5) -- (2.6,3.5);
\draw[thick,->] (4.4,3.5) -- (5.6,3.5); \draw[thick,->] (7.4,3.5) -- (8.6,3.5);
\draw[thick,->] (10.4,3.5) -- (11.6,3.5); \draw[thick,->] (13.4,3.5) -- (14.6,3.5); 
\draw[thick,->] (16.4,3.5) -- (17.6,3.5); 
\draw (2,3.9) node {$\lambda_1q_1$} (5,3.9) node {$\lambda_2q_2$} (8,3.9) node {$\lambda_{j\!-\!1}q_{j\!-\!1}$};
\draw (11,3.9) node {$\lambda_{j}q_j$} (17,3.9) node {$\lambda_{n\!-\!1}q_{n\!-\!1}$};
\draw[thick,->] (2.6,-1.4) -- (1.6,-0.4); \draw[thick,->] (5.6,-1.4) -- (4.6,-0.4); \draw[thick,->] (8.6,-1.4) -- (7.6,-0.4);
\draw[thick,->] (11.6,-1.4) -- (10.6,-0.4); \draw[thick,->] (14.6,-1.4) -- (13.6,-0.4); \draw[thick,->] (17.6,-1.4) -- (16.6,-0.4);
 \draw[thick,->] (20.6,-1.4) -- (19.6,-0.4);
 \draw (2.5,-0.65) node {$\alpha^*_{1}$} (5.5,-0.65) node {$\alpha^*_{2}$} (11.5,-0.65) node {$\alpha^*_{j}$};
 \draw (17.7,-0.65) node {$\alpha^*_{n\!-\!1}$} (20.5,-0.65) node {$\alpha^*_{n}$};

\draw[thick,->] (1.4,4.4) -- (2.2,5.4); \draw[thick,->] (4.4,4.4) -- (5.2,5.4); \draw[thick,->] (7.4,4.4) -- (8.2,5.4);
\draw[thick,->] (10.4,4.4) -- (11.2,5.4); \draw[thick,->] (13.4,4.4) -- (14.2,5.4); \draw[thick,->] (16.4,4.4) -- (17.2,5.4);
\draw[thick,->] (19.4,4.4) -- (20.2,5.4);
\draw (2.5,4.8) node {$\lambda_{1}p_1$} (5.5,4.8) node {$\lambda_{2}p_2$} (11.5,4.8) node {$\lambda_{j}p_j$};
\draw (17.9,4.8) node {$\lambda_{n\!-\!1}p_{n\!-\!1}$} (20.3,4.8) node {$\lambda_{n}$} ;


\end{tikzpicture}

\caption{A Coxian distribution (top) and its standard reverse (bottom)}
   \label{SystRelErlang}
 \end{figure}
}
\theoremstyle{plain} 
\newtheorem{theorem}{Theorem}
\newtheorem{Proposition}[theorem]{Proposition}
\newtheorem{Corollary}[theorem]{Corollary}
\newtheorem{remark}{Remark}
\newtheorem{example}{Example}
\theoremstyle{definition} 
\newcommand{\e}{\mathrm{e}} 
\newcommand{\dd}{\mathrm{d}} 
\newcommand{\Exp}{\mathbb{E}}
\newcommand{\Prob}{\mathbb{P}}
\newcommand{\bfD}{\bm{D}}
\newcommand{\bft}{\bm{t}}
\newcommand{\bfT}{\bm{T}}
\newcommand{\bfe}{\bm{e}}
\newcommand{\bfu}{\bm{u}}
\newcommand{\bfU}{\bm{U}}
\newcommand{\bfDelta}{\bm{\Delta}}
\newcommand{\bfalpha}{{\bm \alpha}}
\newcommand{\bfnu}{\bm{\nu}}
\newcommand{\bfpi}{\bm{\pi}}
\newcommand{\1}{\bm{1}}
\newcommand{\oX}{{\overline X}}
\newcommand{\uX}{{\underline X}}
\newcommand{\osig}{{\overline \sigma}}
\newcommand{\usig}{{\underline \sigma}}
\newcommand{\op}{{\overline p}}
\newcommand{\up}{{\underline p}}
\newcommand{\oq}{{\overline q}}
\newcommand{\uq}{{\underline q}}
\newcommand{\ind}[1]{\mbox{\rm 1}_{\{#1\}}}
\newcommand{\D}{{\rm d}}
\newcommand{\topp}{{\top\hspace{-2.5mm}\top}}
\begin{document}
\begin{abstract}
This note provides a factorization of a L\'evy pocess over a phase-type horizon 
$\tau$ given the phase at the supremum, thereby extending the Wiener-Hopf factorization
for $\tau$ exponential.
One of the factors is defined using time reversal of the phase process. It is shown that there are a variety of time-reversed 
representations, all yielding the same factor.
Consequences of this are discussed and examples provided.
Additionally, some explicit formulas for the joint law of the supremum and the terminal value of the process at $\tau$ are given. 
\end{abstract}
\subjclass[2010]{Primary 60G51, 60J27}
\keywords{Wiener-Hopf factorization, splitting, time reversal, phase-type distribution}
\maketitle

\section{Introduction}
For  a non-monotone L\'evy process $X_t,t\geq 0$, define the running supremum and infimum and the times they are attained by 
\begin{gather*}\oX_t\,=\,\sup_{s\le t} X_s\,,\qquad \osig_t=\inf\{s\in[0,t]:\,X_s\vee X_{s-}=\oX_t\}\,,\\
\uX_t\,=\,\inf_{s\le t} X_s\,,\qquad \usig_t=\sup\{s\in[0,t]:\,X_s\wedge X_{s-}=\uX_t\}\,,
\end{gather*}
where by convention $X_{0-}=0$. The law of $\oX_t$ (and analogously of $\uX_t$,) plays a prominent role in applied probability, which explains an abundance of related literature.
Identification of these laws becomes much easier when~$t$ is replaced by an independent exponentially distributed random time~$\tau$, which essentially corresponds to taking Laplace transform in time. In the case of no positive jumps $\oX_\tau$ has an exponential distribution and the law of $\uX_\tau$ can be expressed in terms of the so-called scale function, which is known in explicit form in a number of subcases~\cite{scale_functions}. Moreover, one may add positive jumps of phase-type (more generally, jumps with a rational Laplace transform) while preserving tractability of the problem, cf.~\cite{asmussen_avram_pistorius, lewis_mordecki}. For further (semi-) explicit examples see~\cite{meromorphic} and references therein.

In the exponential case,
the joint law of $\oX_\tau$ and $X_\tau$ follows from the traditional Wiener-Hopf factorization, which also is an important tool in the study of the former law. It asserts that $\oX_\tau$ and $X_\tau-\oX_\tau$ are independent,
with the latter having the same distribution as $\uX_\tau$:
\begin{equation}\label{eq:WH}
\Prob\bigl(\oX_\tau\in \D x,\,X_\tau-\oX_\tau\in \D y\bigr)\ =\ 
\Prob\bigl(\oX_\tau\in \D x\bigr)\Prob\bigl(\uX_\tau\in \D y\bigr).
\end{equation}
This factorization fails for a deterministic time horizon which, nevertheless, can be remedied by conditioning on~$\osig_t$:
\begin{align}\label{eq:WH_gen}
\Prob&\bigl(\oX_t\in \D x,X_t-\oX_t\in \D y\,\big|\,\osig_t=s\bigr)\\
&=\Prob\bigl(\oX_t\in \D x\,\big|\,\osig_t=s\bigr)\Prob\bigl(\uX_t\in \D y\,\big|\,\usig_t=t-s\bigr).\nonumber\end{align}
for $s\in(0,t)$, see Appendix~\ref{sec:appendix} for further discussion. 
One may note that some intuition can be gained by proving the analogous result in the random walk setting, which is a simple exercise. The main problem of~\eqref{eq:WH_gen}, when it comes to explicit formulas, is that it requires the joint distribution of $(\oX_t,\osig_t)$, and similarly of $(\uX_t,\usig_t)$, which are presumably only available in the Brownian case treated in~\cite{Shepp}.
The aim of the present work is to state a factorization which is, in some sense, in-between~\eqref{eq:WH} and~\eqref{eq:WH_gen}. Our factorization allows for more general distributions of $\tau$ than exponential - the  dense class of phase-type distributions - but requires only the joint distribution of $\oX_\tau$ and the phase at $\osig_\tau$.    

From now on we assume that the time horizon $\tau$ is an independent random variable with a phase-type (PH) distribution, see \cite[\S III.4]{apq} or~\cite{BladtN}. By definition, $\tau$ is the life-time of some terminating time-homogenous Markov jump process $J_t,t\geq 0$ on a finite state space of phases. 
In this case it is natural to condition on the phase at $\osig_\tau$ (rather than on this time itself), which we shall show indeed leads to a factorization:
\begin{align}\label{eq:WH_ph}\Prob&\bigl(\oX_\tau\in \D x,X_\tau-\oX_\tau\in \D y\,\big |\,J_{\osig_\tau}=k\bigr)\\
&=\Prob(\oX_\tau\in \D x\,|\,J_{\osig_\tau}=k)\Prob^*(\uX_\tau\in \D y\,|\,J_{\usig_\tau}=k)\nonumber\end{align}
for any $k$, where $\Prob^*$ is a probability measure under which $X$ and $\tau$ are independent, $X$ has the law of the original L\'evy process and $J$ develops according to the standard time-reversion of~$J$; see Section~\ref{sec:factorization} for more detail. In fact, we present a more general version of this identity in Theorem~\ref{thm:ph}. To the best of our knowledge this result has not yet been stated in this form in the literature, even though Wiener-Hopf factorization of Markov additive processes (MAPs, of which we have a simple example) has been addressed at various degrees of generality in~\cite{dereich,splitting,kaspi,klusik_palmowski,kyprianou_palmowski}. The closest result is \cite[Cor.\ 5.1]{splitting} which uses a particular reversal, see Section~\ref{sec:reversal} for details, whereby for example  an Erlang distributed~$\tau$
is excluded. We remark that the factorizations~\eqref{eq:WH}, \eqref{eq:WH_gen} and~\eqref{eq:WH_ph} can be stated in greater generality by including times $\osig_\tau$ and $\tau-\osig_\tau$, or even sample paths splitting at~$\osig_\tau$; in this work we aim, however, at simple expressions rather than the utmost generality, but see Section~\ref{sec:discounting}. Throughout this work the term `splitting' refers to the decomposition of the process into two independent pieces conditional on the phase at the time of splitting.

The distribution of $(\oX_\tau,J_{\osig_\tau})$ is well known in the case when $X$ has no positive jumps and, more generally, when the positive jumps have a phase-type distribution. It is easy to see that the density of $\oX_\tau$ conditional $J_{\osig_\tau}$ can be expressed in terms of the generator of the phase process at first passage times, which is studied in detail in a number of works including~\cite{SA95,breuer,jordan_chains,dieker_mandjes,ezhov_skorokhod}.
Under the same assumption the distribution of $(X_\tau-\oX_\tau,J_\tau)$ can be expressed in terms of the matrix-valued scale function of the associated MAP~\cite{potential}, and the same is true with respect to~$(\uX_\tau,J_{\usig_\tau})$.
Furthermore, the transform $\Exp(\e^{-\theta\uX_\tau};J_{\usig_\tau}=k)$ is given in~\cite[Prop.\ 6.1]{splitting} in terms of some basic matrices, whereas some related results and calculations can be found in~\cite{dieker_mandjes,kyprianou_palmowski}.
Finally, we point out  in Section~\ref{sec:expressions} that if $X$ is a jump diffusion with phase-type jumps in both directions then~\eqref{eq:WH_ph} has a particularly simple explicit form.

Further related literature includes a multitude of papers in finance, insurance and queueing theory exploring the idea of Erlangization or Canadization, that is, approximation of a deterministic time $t$ by an Erlang time $\tau$ with $n$ phases and rates $n/t$, leading to various explicit identities. 
This idea together with the traditional Wiener-Hopf factorization is used in~\cite{kyprianou_simulation} to provide an algorithm for simulation of~$(\oX_\tau,X_\tau)$ approximating $(\oX_t,X_t)$. 

 The present research grew out of some pricing problems in life insurance considered in~\cite{GY13} and related papers.
There again the joint distribution of $X_\tau$ and $\oX_\tau$ is needed for a horizon $\tau$ that is the remaining lifetime of the insured, which by denseness may be appropriately modelled by a phase-type distribution~\cite{lin,zadeh}. The approach of~\cite{GY13} is to approximate the distribution of $\tau$
by a sum of exponential terms and use the Wiener-Hopf factorization for an exponential horizon. However, this encounters the problem
of lack of  satisfying statistical methods, so that the approach may e.g.\ lead to negative densities or tail probabilities. In contrast, for phase-type
distribution there are well developed 
statistical methods based on the EM algorithm~\cite{EMPHT},
as well as  software is available. The use of them and the factorization
in this paper will be presented in a separate paper \cite{PHLI}.

\section{The factorization}\label{sec:factorization}
\subsection{Standard reversal}
In this section, we assume $\tau$ to be PH with representation $(\bfalpha,\bfT)$.
This means that $\tau=\inf\{t\geq 0:J_t=\dagger\}$ is the life-time
of a terminating time-homogeneous Markov process $J_t$ on a finite state space with initial distribution
$\bfalpha$  (a row vector) and  generator $\bfT$, where $\dagger$ is an additional absorbing state. The associated vector of exit rates is given by $\bft=-\bfT\1$, where $\1$ is a column vector with all entries equal to 1. Without loss of generality we may and do assume that $\bfT+\bft\bfalpha$ is an irreducible matrix, because otherwise some phases can be eliminated without affecting the distribution of~$\tau$, see~\cite[Lemma 5.4.1]{BladtN}. In particular, this implies that all the non-terminating states are transient and so $T$ is invertible~\cite[Thm.\ 3.1.11]{BladtN}.

The equivalent representation $(\bfalpha^*,\bfT^*)$ of $\tau$ is obtained by considering 
\[J^*_t=\begin{cases}J_{(\tau-t)-}, &\text{ if }t<\tau,\\ \dagger, &\text{ otherwise.}\end{cases}\]
It turns out that $J^*$ is a terminating time-homogeneous Markov process with initial distribution $\bfalpha^*$ and generator $\bfT^*$ given by
\begin{equation}\label{eq:timerev}\bfalpha^*=\bft^\topp\bfDelta_{\bfnu},\quad \bfT^*=\bfDelta^{-1}_{\bfnu}\bfT^\topp\bfDelta_{\bfnu},\quad \bft^*=-\bfT^*\1=\bfDelta^{-1}_{\bfnu}\bfalpha^\topp\end{equation}
where ${}^\topp$ stands for transposition, and $\bfDelta_{\bfnu}$ is the diagonal matrix with the positive vector $\bfnu=-\bfalpha \bfT^{-1}$ on the diagonal.  
To see this consider a doubly infinite stationary Markovian arrival process 
(\cite[\S XI.1]{apq}) 
with generator $\bfT+\bfD$, where $\bfD=\bft\bfalpha$ gives the rates of phase changes with arrivals, and note that the corresponding stationary distribution is proportional to~$\bfnu$~\cite[\S III.5]{apq}. 
Then just look backwards in time. Alternatively, one may show that $J^*$ is a time-homogenous Markov chain by looking at discrete skeletons and conditioning on the epoch of absorption.

Let  $\Prob_i$ refer to the law $(X,J)$ with independent components conditional on $J_0=i$, i.e. on the initial phase being~$i$. 
We assume that $(X,J)$ under $\Prob^*$ has the same law as $(X,J^*)$ under $\Prob$. In other words, the only difference between $\Prob^*$ and $\Prob$ is that the phase process $J$ is characterized by $(\bfalpha^*,\bfT^*)$ under $\Prob^*$  instead of $(\bfalpha,\bfT)$ under $\Prob$.
We now state a more general version of~\eqref{eq:WH_ph}; from this, ~\eqref{eq:WH_ph} is obtained by summing over $j$ and over $i$ with weights~$\alpha_i$, and then dividing both sides by~$c_k$.
\begin{theorem}\label{thm:ph} The following factorization holds true whenever $\alpha_i\neq 0:$
	\begin{align}\notag
	\Prob_i&(\oX_\tau\in \D x,X_\tau-\oX_\tau\in \D y,J_{\osig_\tau}=k,J_{\tau-}=j)\\ \label{15.11a}
	&=\Prob_i(\oX_\tau\in \D x,J_{\osig_\tau}=k)\Prob_j^*(\uX_\tau\in \D y,J_{\usig_\tau}=k)\frac{\alpha^*_j}{c_k},
	\end{align}
	where $c_k=\Prob(J_{\osig_\tau}=k)=\Prob^*(J_{\usig_\tau}=k)>0$. 
\end{theorem}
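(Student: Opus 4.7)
The plan combines two ingredients: a MAP-splitting of $(X,J)$ at the supremum time $\osig_\tau$ conditional on $J_{\osig_\tau}$, and a global time reversal on $[0,\tau]$ linking $\Prob$ and $\Prob^*$. For the first, I invoke the splitting at the supremum: conditional on $J_{\osig_\tau}=k$, the pre- and post-supremum segments are independent, and the law of the post-supremum segment depends only on $k$. For phase-type $\tau$ this follows from the classical exponential-horizon case by conditioning on the sequence of exponential sojourns of $J$ up to absorption (cf.~\cite{splitting}). Setting $F_{ik}(\D x):=\Prob_i(\oX_\tau\in\D x,J_{\osig_\tau}=k)$ and $H_{kj}(\D y):=\Prob_i(X_\tau-\oX_\tau\in\D y,J_{\tau-}=j\mid J_{\osig_\tau}=k)$, the latter being independent of $i$, this yields
\[\Prob_i(\oX_\tau\in\D x,\,X_\tau-\oX_\tau\in\D y,\,J_{\osig_\tau}=k,\,J_{\tau-}=j)\,=\,F_{ik}(\D x)\,H_{kj}(\D y).\]

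Next, working under $\Prob=\sum_i\alpha_i\Prob_i$, I apply the global time reversal $\tilde X_s=X_\tau-X_{(\tau-s)-}$, $\tilde J_s=J_{(\tau-s)-}$. By~\eqref{eq:timerev} the reversed $J$ has representation $(\bfalpha^*,\bfT^*)$, and by reversibility of L\'evy processes on a fixed horizon $\tilde X$ has the law of $X$, with independence preserved; hence $(\tilde X,\tilde J)$ under $\Prob$ has the law of $(X,J)$ under $\Prob^*$. Using that $X\perp J$ forces $\osig_\tau$ to be a.s.\ not a jump time of $J$, the reversal sends $\oX_\tau\leftrightarrow\tilde X_\tau-\tilde\uX_\tau$, $X_\tau-\oX_\tau\leftrightarrow\tilde\uX_\tau$, $J_{\osig_\tau}\leftrightarrow\tilde J_{\tilde\usig_\tau}$, $J_{\tau-}\leftrightarrow\tilde J_0$, and $J_0\leftrightarrow\tilde J_{\tau-}$, producing
\[\alpha_i\Prob_i(\oX_\tau\in\D x,X_\tau-\oX_\tau\in\D y,J_{\osig_\tau}=k,J_{\tau-}=j)=\alpha^*_j\Prob_j^*(X_\tau-\uX_\tau\in\D x,\uX_\tau\in\D y,J_{\usig_\tau}=k,J_{\tau-}=i).\]
Applying the same splitting to the right-hand side (now at the infimum under $\Prob^*$) yields $\alpha^*_j F^*_{jk}(\D y)H^*_{ki}(\D x)$ with $F^*_{jk}(\D y):=\Prob_j^*(\uX_\tau\in\D y,J_{\usig_\tau}=k)$ and $H^*_{ki}$ independent of $j$, so $\alpha_iF_{ik}(\D x)H_{kj}(\D y)=\alpha^*_jF^*_{jk}(\D y)H^*_{ki}(\D x)$.

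To close, the ratio $H^*_{ki}(\D x)/[\alpha_iF_{ik}(\D x)]$ depends only on $(k,i,x)$ while, by the last identity, it also equals $H_{kj}(\D y)/[\alpha^*_jF^*_{jk}(\D y)]$, which depends only on $(k,j,y)$; both must therefore coincide with a common constant $c'_k$ depending on $k$ alone. The normalisation $\sum_j\int H_{kj}(\D y)=1$ then gives
\[1\,=\,c'_k\sum_j\alpha^*_j\Prob_j^*(J_{\usig_\tau}=k)\,=\,c'_k\,c_k,\]
so $c'_k=1/c_k$, whence $H_{kj}(\D y)=(\alpha^*_j/c_k)\Prob_j^*(\uX_\tau\in\D y,J_{\usig_\tau}=k)$, and substitution into the first-step factorisation produces~\eqref{15.11a}. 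The main obstacle is the splitting assertion itself: while the splitting at the supremum for MAPs with exponential horizon is standard, confirming in the phase-type setting that the post-supremum law is measurable only through $J_{\osig_\tau}$ requires either an excursion-theoretic treatment or a careful phase-by-phase reduction to the exponential case.
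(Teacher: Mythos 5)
Your proof is correct and follows essentially the same route as the paper: splitting at the supremum (resp.\ infimum) conditional on the phase there, global time reversal on $[0,\tau]$ giving the $\alpha_i\leftrightarrow\alpha^*_j$ identity, and a separation-of-variables argument to identify the constant $c_k$; the paper likewise cites \cite[Prop.~3.1]{splitting} for the splitting step you flag as the main input. The only (trivially fixable) omission is that you normalise only $\sum_j\int H_{kj}(\D y)=1$, which gives $c_k=\Prob^*(J_{\usig_\tau}=k)$; the symmetric normalisation $\sum_i\int H^*_{ki}(\D x)=1$ is needed to also conclude $c_k=\Prob(J_{\osig_\tau}=k)$ as stated in the theorem.
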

\begin{proof}
We assume that $t_j\neq 0$, because otherwise both sides are 0 and there is nothing to prove.
We start with the following splitting result~\cite[Prop.~3.1]{splitting}(results of this type have a long history, see~\cite{millar_survey}):
	\begin{align*}
	&\Prob_i(\oX_\tau\in \D x,X_\tau-\oX_\tau\in \D y,J_{\osig_\tau}=k,J_{\tau-}=j)\\
	&=\Prob_i(\oX_\tau\in \D x,J_{\osig_\tau}=k)\Prob^\downarrow_k(X_{\tau}\in \D y,J_{\tau-}=j),
	\end{align*}
	where $\Prob^\downarrow_k$ is the conditional probability law  the post-supremum process $\bigl(X_{\osig_\tau+t}-\oX_{\tau},J_{\osig_\tau+t}\bigr)$ given that the second component starts in phase~$k$. It is noted that, unlike the general MAP case, there is no phase change at $\osig_\tau$ a.s. 
Similarly, there is splitting at the infimum under $\Prob^*$, and so we have by reversing time at the life-time of $J$	\begin{align*}
	&\alpha_i\Prob_i(X_\tau-\oX_\tau\in \D y,J_{\osig_\tau}=k,J_{\tau-}=j)\\
	&=\alpha^*_j\Prob^*_j(\uX_\tau\in \D y,J_{\usig_\tau}=k,J_{\tau-}=i)\\
	&=\alpha^*_j\Prob^*_j(\uX_\tau\in \D y,J_{\usig_\tau}=k){\Prob^*}^\uparrow_k(J_{\tau-}=i),
	\end{align*}
where in the first equality we also use the fact that the reversed $X$, i.e. $X_\tau-X_{(\tau-t)-}$,  has the law of the original process, because of independence of~$\tau$ (drawing a picture might be helpful here). 
Hence
\begin{equation}\label{eq1}\alpha_i\Prob_i(J_{\osig_\tau}=k)\Prob^\downarrow_k(X_{\tau}\in \D y,J_{\tau-}=j)=\alpha^*_j\Prob^*_j(\uX_\tau\in \D y,J_{\usig_\tau}=k){\Prob^*_k}^\uparrow (J_{\tau-}=i),\end{equation}
and in particular
\[\frac{\alpha_i\Prob_i(J_{\osig_\tau}=k)}{{\Prob^*_k}^\uparrow(J_{\tau-}=i)}=\frac{\alpha^*_j\Prob^*_j(J_{\usig_\tau}=k)}{\Prob^\downarrow_k(J_{\tau-}=j)}=c_k,\]
which may depend on $k$ only; it is easy to see that the denominators must be positive, because $\alpha_i,t_j>0$. Now the stated form of~$c_k$ follows by multiplying both sides with the denominator and summing over $i$ and $j$, respectively; positivity of $c_k$ is obvious from irreducibility of $\bfT+\bft\bfalpha$. Finally, from~\eqref{eq1} we find that 
\[\Prob^\downarrow_k(X_{\tau}\in \D y,J_{\tau-}=j)=\alpha^*_j\Prob^*_j(\uX_\tau\in \D y,J_{\usig_\tau}=k)/c_k\]
and the result follows. 
\end{proof}

\subsection{General reversal and examples} \label{sec:reversal}
Define $\mathcal I(\bfT)$ to be the set of all distributions $\widehat\bfalpha$ such that $\bfT+\bft\widehat\bfalpha$ is irreducible.
The result of this section is that $\Prob^*$ does not need to depend on $\bfalpha$, that is, any other $\widehat\bfalpha\in\mathcal I(\bfT)$ can be used to define the time-reversed representation $(\bfalpha^*,\bfT^*)$. To state the result, let 
$\widehat\Prob^*$ correspond to the time-reversed representation $(\widehat\bfalpha^*,\widehat\bfT^*)$ derived from $(\widehat\bfalpha,\bfT)$.

\begin{Corollary}\label{cor} Assume that $\widehat\bfalpha\in\mathcal I(\bfT)$.
Then the identity~\eqref{eq:WH_ph} continues to hold if $\Prob^*$ is replaced by $\widehat\Prob^*$. Equivalently,
\begin{equation}\label{eq:inv}
\Prob^*(\uX_\tau\in \D y|J_{\usig_\tau}=k)\ =\ \widehat\Prob^*(\uX_\tau\in \D y|J_{\usig_\tau}=k),
\end{equation}
for any~$k$.
\end{Corollary}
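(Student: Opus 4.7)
The plan is to apply Theorem~\ref{thm:ph} twice — once with the representation $(\bfalpha,\bfT)$ as stated, and once with $(\widehat\bfalpha,\bfT)$ replacing $(\bfalpha,\bfT)$ — and to compare the two resulting factorizations.

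Summing the identity~\eqref{15.11a} over $j$, and using $\sum_j\alpha^*_j\Prob^*_j(\uX_\tau\in\D y,J_{\usig_\tau}=k)=\Prob^*(\uX_\tau\in\D y,J_{\usig_\tau}=k)$ together with $c_k=\Prob^*(J_{\usig_\tau}=k)$, yields
\begin{equation*}
\Prob_i(\oX_\tau\in\D x,X_\tau-\oX_\tau\in\D y,J_{\osig_\tau}=k)=\Prob_i(\oX_\tau\in\D x,J_{\osig_\tau}=k)\,\Prob^*(\uX_\tau\in\D y\mid J_{\usig_\tau}=k).
\end{equation*}
The same argument, applied to the L\'evy process paired with the PH horizon $\hat\tau$ of representation $(\widehat\bfalpha,\bfT)$ (admissible because $\widehat\bfalpha\in\mathcal I(\bfT)$), gives the parallel identity in which $\Prob_i$, $\tau$, and $\Prob^*$ are replaced by $\widehat\Prob_i$, $\hat\tau$, and $\widehat\Prob^*$.

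The decisive observation is that $\Prob_i$ and $\widehat\Prob_i$ are the same law: both describe the L\'evy process $X$ paired with a terminating Markov chain $J$ of common generator $\bfT$, started deterministically in phase $i$, and so the overall initial distribution of $J$ plays no role once one conditions on $J_0=i$. Hence the left-hand sides and the first factors on the right-hand sides of the two identities coincide. Selecting $i$ and $x$ for which this common first factor is strictly positive — possible by irreducibility of $\bfT+\bft\bfalpha$, which guarantees that every phase $k$ is reachable from any $i$ — and dividing through yields~\eqref{eq:inv}. The equivalent claim that~\eqref{eq:WH_ph} continues to hold with $\widehat\Prob^*$ in place of $\Prob^*$ then follows by substitution.

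The main technical wrinkle concerns the condition $\alpha_i\neq 0$ under which Theorem~\ref{thm:ph} is stated: a priori both applications of the theorem require a common $i$ in the supports of $\bfalpha$ and $\widehat\bfalpha$, which might be disjoint. Inspection of the theorem's proof, however, shows that the formula $\Prob^\downarrow_k(X_\tau\in\D y,J_{\tau-}=j)=\alpha^*_j\Prob^*_j(\uX_\tau\in\D y,J_{\usig_\tau}=k)/c_k$ — and hence the factorization itself after substitution into the splitting at the supremum — is independent of $i$ and therefore extends to every starting phase. This disposes of any support-overlap concern.
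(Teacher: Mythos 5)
Your argument is correct and follows essentially the same route as the paper: both rest on the observation that the left-hand side and the first factor in Theorem~\ref{thm:ph} depend only on $\bfT$ and the law of $X$ (not on $\bfalpha$), so that writing the factorization for both representations and cancelling the common first factor yields~\eqref{eq:inv}. The only divergence is your treatment of the support issue --- the paper interposes a strictly positive $\widetilde\bfalpha$ and uses transitivity, whereas you drop the restriction $\alpha_i\neq 0$ from Theorem~\ref{thm:ph} by inspecting its proof; both work, though note that irreducibility of $\bfT+\bft\bfalpha$ does not make every $k$ reachable from every $i$ under $\bfT$ alone (consider Erlang), so the existence of a suitable $i$ with $\Prob_i(J_{\osig_\tau}=k)>0$ should instead be deduced from $c_k>0$.
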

\begin{proof}
The key observation is that the left hand side of the identity in Theorem~\ref{thm:ph} does not depend on the initial distribution~$\bfalpha$, but only on the law of $X$ and the generator~$\bfT$. More precisely,
consider the identity of Theorem~\ref{thm:ph} for the initial distribution $\widehat\bfalpha\in\mathcal I(\bfT)$. Upon summation over $j$ it yields
\begin{align*}\MoveEqLeft \Prob_i(\oX_\tau\in \D x,X_\tau-\oX_\tau\in \D y,J_{\osig_\tau}=k)\\
&=\ \Prob_i(\oX_\tau\in \D x,J_{\osig_\tau}=k)\widehat\Prob^*(\uX_\tau\in \D y|J_{\usig_\tau}=k),\end{align*}
assuming that $\widehat\alpha_i>0$. Summing up over $i$ with weights $\alpha_i$ completes the proof for the case when $\widehat\alpha_i=0$ implies $\alpha_i=0$. This restriction can be dropped with regard to~\eqref{eq:inv} since we can start with positive $\widetilde\bfalpha$ and consider replacing it by $\bfalpha$ and by $\widehat\bfalpha$. Now the same is true about~\eqref{eq:WH_ph}.
\end{proof}

\begin{example}\label{Ex:29.9c}\rm
The relation \eqref{eq:inv} may be somewhat surprising at a first sight, but here is a simple example.
Consider the Erlang distribution with $n$ phases, i.e. $\bfalpha=\bfe_1$ and $T_{ij}=\lambda(-\ind{j=i}+\ind{j=i+1})$, and so the time-reversed representation is Erlang as well, but starting in phase~$n$. Next, we choose an arbitrary $\widehat\bfalpha$ with positive components leading to the time-reversed representation with possibility of exit in each phase, see also~\eqref{eq:relation} below.
Even though there is a possibility of early exit for the reversed phase process, the identity~\eqref{eq:inv} is still true. To see this, consider the process $X$ on the time interval restricted to the phases $n$ through $k$ under $\Prob^*$ and $\widehat\Prob^*$, and the event that $X$ attains its infimum $I$ at phase~$k$ and $I\in \D y$. Next consider the other phases and the event that $X$ does not update its infimum attained in phase $k$, which by the traditional splitting at the infimum (for an exponential time) should be independent of the other event. Now~\eqref{eq:inv} follows in this simple setup.
\end{example}

Corollary~\ref{cor}  provides a certain freedom in choosing the time-reversed representation which, however, does not lead to substantial structural changes, because of the following 
easily seen facts:
\begin{equation}\label{eq:relation}T_{ij}=0 \Leftrightarrow T^*_{ji}=0,\qquad t_i=0 \Leftrightarrow\alpha^*_i=0\qquad \alpha_i=0 \Leftrightarrow t^*_i=0,\end{equation}
see~\eqref{eq:timerev}.
Hence the most we can do, by choosing $\widehat\bfalpha$ appropriately, is to eliminate the possibility of exit from certain states in the reversed representation.
\begin{example}\label{Ex:9.11a}\rm
Consider a generalized Coxian distribution with an arbitrary initial distribution $\bfalpha$ and generator given by $T_{ij}=-\lambda_i\ind{j=i}+\lambda_i(1-p_i)\ind{j=i+1}$, where $\lambda_i>0$ and $p_i=1-q_i\in[0,1)$ for $i<n$ and $p_n=1$. In fact, we need to assume that $\alpha_1>0$ otherwise $\bfalpha\notin\mathcal I(\bfT)$. The reversed representation simplifies for $\widehat\bfalpha=\bfe_1$ (standard Coxian) yielding 
$\widehat T^*_{ij}=-\lambda_i\ind{j=i}+\lambda_i\ind{j=i-1}$, and $\widehat\alpha^*_i=p_i(1-p_{i-1})\cdots(1-p_1)$, so that termination always occurs in phase~1, see Figure~\ref{SystRelErlang}.
\end{example}
 \CoxFig
 
\subsection{Remarks on alternative reversals}
\begin{remark}\label{remark:alt_rev}\rm
A seemingly even more general reversal is obtained by considering a 
Markovian Arrival Process with generator $\bfT+\bfD$, where $\bfD$ is not necessarily of the form $\bft\widehat\bfalpha$.
 Provided $\bfT+\bfD$ is irreducible, any $\bfD$ satisfying
 $\bfD\1=\bft$ will do. This, however, does not lead to more general backward representations. 
 Let $\widehat\bfalpha$ be the corresponding event stationary distribution, which then must be proportional to $\bfpi \bfD$~\cite[Prop.\ XI.1.4]{apq}, where $\bfpi$ is the stationary distribution of $\bfT+\bfD$.
 Then $\bfpi$ is also stationary for $\bfT+\bft\widehat\bfalpha$, as can be easily verified, and indeed this reversal is the same as the standard reversal for the initial distribution $\widehat\bfalpha$. Note also that irreducibility of $\bfT+\bfD$ implies that of $\bfT+\bft\widehat\bfalpha$.
 \end{remark}

\begin{remark}\label{Rem:5.10a}\rm
Suppose we require that $\bfalpha=\bfalpha^*$, 
which is the same as $-\bfalpha \bfT^{-1}\bfDelta_{\bft}$ $=\bfalpha$ or, equivalently, $-\bfalpha \bfT^{-1}(\bfT+\bfDelta_{\bft})=\bf 0$. Assume that the matrix $\bfT+\bfDelta_{\bft}$ is irreducible, and so there is a unique stationary distribution $\bfpi$, i.e.\ such that $\bfpi(\bfT+\bfDelta_{\bft})=\bf 0$. Then there exists a positive constant $c$ such that $\bfnu=-\bfalpha \bfT^{-1}=\bfpi/c$. It is now clear that 
\[\bfalpha^*=\bfalpha=\bfpi\bfDelta_{\bft}/(\bfpi\bft), \qquad \bfT^*=\bfDelta^{-1}_{\bfpi}\bfT^\topp\bfDelta_{\bfpi}, \qquad \bft^*=\bft.\]
The same conclusion is obtained if one starts by requiring that $\bft^*=\bft$. Note that here we are in the setting of Remark~\ref{remark:alt_rev} with $\bfD=\bfDelta_{\bft}$.
This time-reversal can be seen as an outcome of the following procedure: (i) remove killing, (ii) reverse the recurrent process, (iii) add back the killing rates.
It is the one used exclusively in~\cite{splitting}, but any other reversal would work there as well. In this regard, note that $\bfT+\bfDelta_{\bft}$ is not irreducible for the Erlang distribution.
\end{remark}

\section{Special cases}
\subsection{Jump diffusion with PH jumps}\label{sec:expressions}
%
%
%
%
Let $X_t$ be a jump diffusion with both up- and downward jumps having PH distributions:
\[X_t=B_t+\sum_{i=1}^{N^+_t} Y_i^+-\sum_{i=1}^{N^-_t} Y_i^-,\]
where $B$ is a  Brownian motion with drift $\mu$ and variance $\sigma^2>0$, $N^\pm$ are Poisson processes and $Y_i^\pm$ have the respective PH distribution for all~$i$; all components being independent. We assume that these PH distributions have representations with $n^\pm$ phases $ 1^+,\ldots,n^+$, $ 1^-,\ldots, n^-$
(but not less), whereas $J$ lives on phases $1,\ldots,n$.
A standard procedure (e.g.\ \cite[Section XI.1c]{apq}) is to level out jumps, thereby providing a
fluid embedding in a terminating Markov-modulated Brownian motion
with $n(1+n^++n^-)$ phases $i,ij^\pm$, $i=1,\ldots,n$, $j=1,\ldots,n^\pm$,
such that the drift parameters in 
the different states are $\mu_i=\mu$, $\mu_{i j^+}=1$,  $\mu_{i j^-}=-1$, 
the only none-zero variances are the $\sigma_i^2=\sigma^2$, and the killing rate is
$t_i$ in state $i$, 0 in all $ij^\pm$  states.
The total maximum of this process then equals 
$\oX_\tau$. Let $\bfU$ and $\bfU^*$ be the generators of
the Markov chain formed by the phase at first passage over positive levels, resp.\ negative levels,
with $n(n^++1)$ and $n(n^-+1)$ rows. Then
\begin{align*}
\Prob_i\bigl(\oX_\tau\in\dd x,\, J_{\osig_\tau}=k\bigr)\ &=\ \left(\e^{\bfU x}\right)_{ik}u_k\D x\\ 
\Prob_j^*(\uX_\tau\in \D y,J_{\usig_\tau}=k)\ &=\ \left(\e^{-\bfU^* y}\right)_{jk}u^*_k\D y,
\end{align*}
where $\bfu=-\bfU\1,\bfu^*=-\bfU^*\1$, $x,-y\geq 0$. 
Here the matrices $\bfU$ and $\bfU^*$  are numerically computable  using either iterative or spectral methods, see~\cite[Thm.\,4.1]{SA95} (or several later sources) and \cite{jordan_chains}.
Our factorization \eqref{15.11a} then takes the following form:
\begin{Corollary}\label{Cor:15.11b} Define $r_k=u_ku_k^*/c_k=u_k^*/(-\bfalpha \bfU^{-1})_k=u_k/(-\bfalpha^* {\bfU^*}^{-1})_k$,
where $\bfalpha$ and $\bfalpha^*$ are  extended to the appropriate size by trailing zeros.
 Then
\begin{align*}\MoveEqLeft  \Prob_i(\oX_\tau\in \D x,X_\tau-\oX_\tau\in \D y,J_{\osig_\tau}=k,J_{\tau-}=j)\\ & =\ 
r_k\alpha^*_j\cdot\left(\e^{\bfU x}\right)_{ik}\D x\cdot \left(\e^{-\bfU^* y}\right)_{jk}\D y.
\end{align*}
\end{Corollary}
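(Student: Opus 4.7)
The plan is essentially to substitute the explicit marginal distributions for $\oX_\tau$ and $\uX_\tau$ (given just before the corollary) into the factorization of Theorem~\ref{thm:ph}. Writing \eqref{15.11a} out, the left-hand side equals
\[\Prob_i(\oX_\tau\in \D x,J_{\osig_\tau}=k)\,\Prob_j^*(\uX_\tau\in \D y,J_{\usig_\tau}=k)\,\frac{\alpha^*_j}{c_k},\]
so plugging in $\left(\e^{\bfU x}\right)_{ik}u_k\D x$ for the first factor and $\left(\e^{-\bfU^* y}\right)_{jk}u^*_k\D y$ for the second gives
\[\frac{u_k u_k^*}{c_k}\,\alpha^*_j\,\left(\e^{\bfU x}\right)_{ik}\left(\e^{-\bfU^* y}\right)_{jk}\D x\,\D y,\]
matching the claimed identity with $r_k=u_k u_k^*/c_k$.

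Second, I would verify the two alternative expressions for $r_k$ by computing $c_k$ in two different ways. Since $\bfU$ is the generator of a terminating Markov chain (all states transient), $\int_0^\infty \e^{\bfU x}\,\D x = -\bfU^{-1}$, so
\[c_k=\Prob(J_{\osig_\tau}=k)=\sum_i \alpha_i\int_0^\infty (\e^{\bfU x})_{ik}u_k\,\D x=(-\bfalpha\bfU^{-1})_k\,u_k,\]
which yields $r_k=u_k^*/(-\bfalpha\bfU^{-1})_k$. Using the equality $c_k=\Prob^*(J_{\usig_\tau}=k)$ from Theorem~\ref{thm:ph} and the analogous integration against $\bfU^*$ produces $c_k=(-\bfalpha^*{\bfU^*}^{-1})_k\,u_k^*$, giving the third representation $r_k=u_k/(-\bfalpha^*{\bfU^*}^{-1})_k$.

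There is no real obstacle: the corollary is a direct specialization of Theorem~\ref{thm:ph} once the marginals in the fluid-embedded jump-diffusion case are quoted. The only minor care needed is matching dimensions, since $\bfU$ is $n(n^++1)\times n(n^++1)$ while $\bfalpha$ has $n$ entries — this is handled by the stated convention of padding $\bfalpha,\bfalpha^*$ with trailing zeros so that the relevant rows of $\e^{\bfU x}$ and $\e^{-\bfU^* y}$ are exactly those corresponding to the ``L\'evy'' phases $i,j\in\{1,\ldots,n\}$ where the process starts and is killed. With that convention the computation of $c_k$ above is literally valid, and the proof is complete.
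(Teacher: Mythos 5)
Your proposal is correct and matches the paper's (implicit) argument: the corollary is stated as a direct substitution of the displayed marginal densities into the factorization of Theorem~\ref{thm:ph}, and your derivation of the alternative expressions for $r_k$ via $c_k=(-\bfalpha\bfU^{-1})_k u_k=(-\bfalpha^*{\bfU^*}^{-1})_k u_k^*$ (using $\int_0^\infty \e^{\bfU x}\,\D x=-\bfU^{-1}$ for the transient first-passage chain) is exactly the intended justification of the definition of $r_k$.
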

Again the left hand side does not depend on the initial distribution $\bfalpha$, and so we may use any $\bfalpha\in\mathcal I(\bfT)$ to define the reversed quantities $\bfalpha^*$ and~$\bfU^*$.

\subsection{Brownian motion with Erlang distributed time horizon}

Assume that $X$ is BM$(\mu,\sigma^2)$ and that $\tau$ has Erlang distribution with $n$ stages and rate parameter $\lambda$; we write ${E}_{n,\lambda}$ for a random variable with such law. We show that in this case
the factors in~\eqref{eq:WH_ph} take a simple form as mixtures of Erlangs with easily computed coefficients:

\begin{Proposition}\label{prop:BME} For the BM-Erlang case,
\begin{align}\label{BME1}\Prob(\oX_\tau\in\dd x,J_{\osig_\tau}=k)\ &=\ \uq(n-k+1)\sum_{i=1}^k\op(i;\,k)
\Prob\bigl(E_{i,\lambda_+}\in\,\dd x\bigr) \,,\\ 
\label{BME2}
\Prob(-\uX_\tau\in\dd x,J_{\usig_\tau}=k)\ &=\ \oq(n-k+1)\sum_{i=1}^k \up(i;\,k)
\Prob\bigl(E_{i,\lambda_-}\in\,\dd x\bigr),
\end{align}
where 
\begin{align}\label{eq:rest}
\lambda_{\pm}\ =\ \mp\frac{\mu}{\sigma^2}+\sqrt{\frac{\mu^2}{\sigma^4}+\frac{2\lambda}{\sigma^2}},\qquad
\oq(k)= \sum_{i=1}^k \op(i;\,k)\,,\quad 
\uq(k)= \sum_{i=1}^k \up(i;\,k),
\end{align}
and $\op(i;\,k),\up(i;\,k)$ can be computed recursively from $\op(1;1)=\up(1;1)=1$ and~\eqref{12.12f}, \eqref{12.12g} below.
\end{Proposition}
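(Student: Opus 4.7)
The plan is to reduce the Brownian motion with Erlang horizon to a discrete process with independent increments by applying the classical exponential Wiener--Hopf identity phase-by-phase, and then to split at its maximum.

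I would first decompose $\tau=E_1+\cdots+E_n$ into iid $\mathrm{Exp}(\lambda)$ pieces with $S_k=E_1+\cdots+E_k$, and apply the exponential Wiener--Hopf identity within each phase: this yields, for each $k$, independent variables $H_k\sim\mathrm{Exp}(\lambda_+)$ (the in-phase supremum) and $R_k=H_k-(X_{S_k}-X_{S_{k-1}})\sim\mathrm{Exp}(\lambda_-)$ (the undershoot from that supremum), with independence across $k$. Setting $Z_k:=X_{S_{k-1}}+H_k=\sum_{j=1}^k H_j-\sum_{j=1}^{k-1}R_j$, one has $\oX_\tau=\max_{1\le k\le n}Z_k$ and, up to a null set of ties, $\{J_{\osig_\tau}=k\}=\{Z_k>Z_j\text{ for all }j\ne k\}$. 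The increments $W_k:=Z_k-Z_{k-1}$ are independent: $W_1=H_1\sim\mathrm{Exp}(\lambda_+)$, while $W_k=H_k-R_{k-1}\sim\mathrm{Exp}(\lambda_+)-\mathrm{Exp}(\lambda_-)$ for $k\ge 2$, so that $(W_1,\ldots,W_k)$ and $(W_{k+1},\ldots,W_n)$ are independent.

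This independence yields the splitting
\[\Prob(\oX_\tau\in\D x,\,J_{\osig_\tau}=k)=\Prob(Z_k\in\D x,\,Z_k>Z_j\,\forall j<k)\cdot \Prob\bigl(W_{k+1}+\cdots+W_{k+i}<0,\,i=1,\ldots,n-k\bigr).\]
For the pre-max factor I would induct on $k$: conditioning on $W_k$ and exploiting memorylessness of the $\mathrm{Exp}(\lambda_+)$ part (given $W_k>r$, the excess $W_k-r$ is again $\mathrm{Exp}(\lambda_+)$, independent of $r$), one sees that the density has the form $P_k(x)\e^{-\lambda_+ x}$ for a polynomial $P_k$ of degree at most $k-1$. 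Expanding in the Erlang density basis $\{\lambda_+^i x^{i-1}\e^{-\lambda_+ x}/(i-1)!\}_{i=1}^k$ defines the coefficients $\op(i;k)$; the one-step passage $k\mapsto k+1$ then produces the recursion \eqref{12.12f}, with base case $P_1(x)=\lambda_+$ giving $\op(1;1)=1$. Integrating over $x$ identifies $\oq(k)=\sum_i\op(i;k)$ as $\Prob(Z_k>Z_j,\,j<k)$. For the post-max factor, time-reversing the iid tail walk $W_{k+1},\ldots,W_n$ identifies $\Prob(W_{k+1}+\cdots+W_{k+i}<0,\,i=1,\ldots,n-k)$ with the probability that the infimum of $X$ over an Erlang-$(n-k+1)$ horizon is first attained at the final phase. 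Running the very same Erlang-mixture analysis for the infimum of $X$ (equivalently, for the supremum of $-X$, which just swaps $\lambda_+\leftrightarrow\lambda_-$) simultaneously yields \eqref{BME2}, defines $\up(i;k)$, produces the recursion \eqref{12.12g}, and identifies this post-max factor with $\uq(n-k+1)$.

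The structural claim (``pre-max density is a polynomial in $x$ times $\e^{-\lambda_+ x}$'') is a clean consequence of memorylessness of the $\mathrm{Exp}(\lambda_+)$ steps, and with it the Erlang-mixture form is essentially automatic. I expect the main technical work to lie in turning the one-step update into the explicit recursions \eqref{12.12f}--\eqref{12.12g}: this requires careful convolution bookkeeping and the identification of how the polynomial coefficients transform when the constraint $Z_{k+1}>\max_{j\le k}Z_j$ is imposed, for instance via tracking the joint law of the current position $Z_k$ and the running maximum $\max_{j\le k}Z_j$ of the walk.
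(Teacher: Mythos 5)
Your proposal is correct and follows essentially the same route as the paper: per-phase exponential Wiener--Hopf giving the $\mathrm{Exp}(\lambda_\pm)$ building blocks, the walk of phase-suprema with independent increments (the paper's ladder-height/splitting argument), time reversal of the tail walk to identify the factor $\uq(n-k+1)$, and memorylessness to get the Erlang-mixture form. The recursion step that you defer to ``convolution bookkeeping'' is resolved in the paper exactly along the lines you suggest: condition on the stage $\ell$ at which the running maximum of the first $k-1$ stages sits, recognize (by reversal) the drawdown there as the $\up(\cdot\,;k-\ell)$ mixture of Erlang-$\lambda_-$ variables that the fresh $\mathrm{Exp}(\lambda_+)$ of stage $k$ must exceed with probability $\theta_+^j$, and use memorylessness of the overshoot to bump the Erlang index from $i-1$ to $i$.
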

\begin{proof}
Write  $\oX_k,\uX_k,\osig_k$
etc.\ for $\oX,\uX,\osig$ computed from the first $k$ stages. 
It is standard that for the exponential case $k=1$, $\oX_1$ and $-\uX_1$ are exponentials with rates $\lambda_\pm$ given in~\eqref{eq:rest}. Moreover, the path splits at the supremum.
Ladder height arguments then give that the distribution of the maximum $\oX_k$ 
is a mixture of $E_{1,\lambda_+},\ldots,E_{k,\lambda_+}$ and,
more generally, that the   same is true for the defective distribution
obtained when one restricts to the set where the maximum 
was attained in stage $k$, i.e.\ $J_{\osig_k}=k$.
Similar observations are in place  for the minimum $-\uX_k$. Denote the corresponding weights of the Erlangs by $\op(i;\,k),\up(i;\,k)$, $i=1,\ldots,k$.
If $J_{\osig_k}=k$, then $J_{\osig_n}=k$ will occur precisely if the reversed sample path (in time and space) restricted to stages $n,\ldots,k$ has its minimum in the last stage~$k$. This event is independent of $(\oX_k,J_{\osig_k})$, and has probability $\underline q(n-k+1)$. Hence the formulas~\eqref{BME1} and~\eqref{BME2} follow. 

It is left to identify $\op(i;\,k)$ and $\up(i;\,k)$. Clearly, $\op(1;1)=\up(1;1)=1$.
Consider the probabilities that independent exponentials of rates $\lambda_\pm$ exceed one another: 
\begin{equation}\label{12.12d}\theta_+=\frac{\lambda_-}{\lambda_++\lambda_-}\,,\ \ 
 \theta_-=\frac{\lambda_+}{\lambda_++\lambda_-}.\end{equation}
 The probability that an independent exponential of rate $\lambda_+$ exceeds $E_{j,\lambda_-}$ is then $\theta_+^j$. 
 To update from $k-1$ to $k\ge 2$, let $\ell\le k-1$ be the stage in which
$\oX_{k-1}$ is attained
and use the same reversal argument as above for stages $\ell,\ldots,k-1$ 
combined with the structure
of $-\uX_{k-\ell}$ as a mixture of Erlang$(j,\lambda_-)$'s, which must be exceeded by an independent exponential of rate $\lambda_+$ corresponding to the stage~$k$. This yields
\begin{equation}\label{12.12f}\op(i;\,k)\ =\ \sum_{\ell=i-1}^{k-1} \op(i-1;\,\ell)\sum_{j=1}^{k-\ell} \up(j;\,k-\ell)\theta_+^j
\end{equation}
for $i=2,\ldots,k$
(by direct inspection $\op(1;\,k)=0$ for $k\ge 2$). 
Here  the Erlang weights  $\up(j;\,k-\ell)$
have been computed in earlier steps since $k-\ell\le k-1$. Similarly, 
\begin{equation}\label{12.12g}\up(i;\,k)\ =\ \sum_{\ell=i-1}^{k-1} \up(i-1;\,\ell)\sum_{j=1}^{k-\ell} \op(j;\,k-\ell)\theta_-^j
\end{equation}
 which completes the
proof.
\end{proof}

Standard reversal of Erlang yields the same Erlang with the opposite ordering of phases. Hence by combining Theorem~\ref{thm:ph} with Proposition~\ref{prop:BME} we get
\begin{align*}
&\Prob(\oX_\tau\in \D x,\oX_\tau-X_\tau\in \D y,J_{\osig_\tau}=k)\\
	&=\sum_{i=1}^k\op(i;\,k)
\Prob\bigl(E_{i,\lambda_+}\in\,\dd x\bigr)\times\sum_{i=1}^{n-k+1} \up(i;\,n-k+1)
\Prob\bigl(E_{i,\lambda_-}\in\,\dd y\bigr),
\end{align*}
which also follows directly using the above reasoning in this simple case.

\section{Discounting}\label{sec:discounting}
In various application, for example, in finance and insurance, it is often required to compute expected discounted quantities. 
To address this issue we state a slightly more general identity than in Theorem~\ref{thm:ph}: for any $\delta\geq 0$ it holds that
\begin{align}\notag
	\Exp_i&\left(\e^{-\delta \tau};\oX_\tau\in \D x,X_\tau-\oX_\tau\in \D y,J_{\osig_\tau}=k,J_{\tau-}=j\right)\\ \label{15.11a}
	&=\Exp_i\left(\e^{-\delta \osig_\tau};\oX_\tau\in \D x,J_{\osig_\tau}=k\right)\Exp_j^*\left(\e^{-\delta \usig_\tau};\uX_\tau\in \D y,J_{\usig_\tau}=k\right)\frac{\alpha^*_j}{c_k},
	\end{align}
where the rest (including reversal and the constants $c_k$) is the same as before. The only change in the proof is that we track the time before and after splitting. Hence we also have the obvious extension of~\eqref{eq:WH_ph} with the above stated freedom in choosing the reversed representation, see Section~\ref{sec:reversal}. Of course, the first factor identifies the joint law of $(\overline X_\tau,\osig_\tau)$, but nevertheless it can be given in explicit form in various cases. Below we specialize to the setting of Section~\ref{sec:expressions}, i.e., the  jump diffusion with PH jumps in both directions.

Let $\bfU(\delta)$ be the analogue of $\bfU$ obtained by additional killing the terminating Markov-modulated Brownian motion at rate $\delta$ in the phases $1,\ldots,n$ corresponding to the Brownian evolutions. Such matrix $\bfU(\delta)$ leads to the factor for the PH time $\tau\wedge e_\delta$, where $e_\delta$ is an independent exponential time of rate~$\delta$. 
Now it is left to ensure that the killing of the first passage Markov chain is due to $\tau$, which results in the identity
\[\Exp_i\left(\e^{-\delta \osig_\tau};\oX_\tau\in\dd x,\, J_{\osig_\tau}=k\right)\ =\ \left(\e^{\bfU(\delta) x}\right)_{ik} u_k\D x.\]
Note that in this formula we use $u_k$ and not $u_k(\delta)$. In particular, we get the following generalization of Corollary~\ref{Cor:15.11b}:
\begin{align}\MoveEqLeft  \Exp_i\left(\e^{-\delta\tau};\oX_\tau\in \D x,X_\tau\in \D y,J_{\osig_\tau}=k,J_{\tau-}=j\right)\nonumber\\ & =\ 
r_k\alpha^*_j\cdot \left(\e^{\bfU(\delta) x}\right)_{ik}\D x\cdot \left(\e^{\bfU^*(\delta) (x-y)}\right)_{jk}\D y
\end{align}
for all $x>0,x>y$, where the constants $r_k$ stay unchanged.

  \appendix
  \section{Formal proof of~\eqref{eq:WH_gen}}\label{sec:appendix}
  The relation~\eqref{eq:WH_gen} is intuitively clear, but in the following we provide a formal proof. 
  It is well-known that $\osig_t$ has a density on $(0,t)$ and so~\eqref{eq:WH_gen} should be understood in the sense of regular conditional distributions. 
  For a non-monotone L\'evy process $X$,~\cite[Thm.\ 6]{chaumont} 
  gives that
  \[\Prob\bigl(\oX_t\in \D x,\oX_t-X_t\in \D y,\osig_t\in \D s\bigr)=\underline n(X_s\in\D x)\overline n(X_{t-s}\in\D y)\D s,\]
 for $s$ restricted to $(0,t)$, where $\overline n$ and $\underline n$ are the It\^o measures of the excursions from the supremum and infimum, respectively. Integrating over $x$ and $y$ we find that $\osig_t$ has a density $\underline n(\zeta>s)\overline n(\zeta>{t-s})$, where $\zeta$ is the life time of the generic excursion. Hence there is a factorization:
 \begin{align*}\Prob&\bigl(\oX_t\in \D x,\oX_t-X_t\in \D y|\osig_t=s\bigr)=\frac{\underline n(X_s\in\D x)}{\underline n(\zeta>s)}\frac{\overline n(X_{t-s}\in \D y)}{\overline n(\zeta>t-s)}\\
 &=\Prob\bigl(\oX_t\in \D x|\osig_t=s\bigr)\Prob\bigl(\oX_t-X_t\in \D y|\osig_t=s\bigr).\end{align*}
But by the standard reversal of $X$ at time $t$ we find
\[\Prob\bigl(\oX_t-X_t\in \D y,\osig_t\in \D s\bigr)=\Prob\bigl(-\uX_t\in \D y,\usig_t\in \D (t-s)\bigr),\]
 which completes the proof of~\eqref{eq:WH_gen}.

\end{document}